\documentclass[letterpaper, 11pt]{article}
\usepackage{fullpage}

\usepackage{epsfig,graphicx,graphics,color}
\usepackage{amsmath,amsthm,amssymb,color,comment,url}
\usepackage{mathtools,thmtools}
\usepackage{thm-restate}
\usepackage{caption}
\usepackage[position=b]{subcaption}
\usepackage[unicode]{hyperref}
\hypersetup{
    colorlinks=true,       
    linkcolor=blue,        
    citecolor=red,         
    filecolor=magenta,     
    urlcolor=cyan,         
    linktocpage=true
}
\usepackage[sort,nocompress]{cite}
\usepackage{titling} 
\thanksmarkseries{alph}
\usepackage{enumerate}

\def\final{0}  
\def\iflong{\iffalse}
\ifnum\final=0  
\usepackage[dvipsnames]{xcolor}
\newcommand{\yu}[1]{{\color{red}[{\tiny \textbf{Yu:} \bf #1}]\marginpar{\color{red}*}}}
\newcommand{\yutaro}[1]{{\color{red}[{\tiny \textbf{Yutaro:} \bf #1}]\marginpar{\color{red}*}}}
\newcommand{\tamas}[1]{{\color{red}[{\tiny \textbf{Tam\'as:} \bf #1}]\marginpar{\color{red}*}}}
\newcommand{\kristof}[1]{{\color{red}[{\tiny \textbf{Krist\'of:} \bf #1}]\marginpar{\color{red}*}}}
\else 
\newcommand{\yu}[1]{}
\newcommand{\yutaro}[1]{}
\newcommand{\tamas}[1]{}
\newcommand{\kristof}[1]{}
\fi

\numberwithin{equation}{section}

\theoremstyle{plain}
\newtheorem{theorem}{Theorem}[section]
\newtheorem{lemma}[theorem]{Lemma}

\newtheorem{claim}[theorem]{Claim}

\theoremstyle{definition}
\newtheorem{definition}[theorem]{Definition}

\theoremstyle{remark}
\newtheorem{remark}{Remark}

\newcommand{\RR}{{\mathbb{R}}}
\newcommand{\ZZ}{{\mathbb{Z}}}
\newcommand{\opt}{{\mathrm{opt}}}
\newcommand{\lopt}{{\mathrm{lexopt}}}
\newcommand{\bM}{{\mathbf{M}}}
\newcommand{\cI}{{\mathcal{I}}}
\newcommand{\cF}{{\mathcal{F}}}

\renewcommand{\sp}{{\mathrm{span}}}

\makeatletter
\def\namedlabel#1#2{\begingroup
    #2%
    \def\@currentlabel{#2}%
    \phantomsection\label{#1}\endgroup
}
\makeatother

\usepackage[algo2e,boxruled,linesnumbered,procnumbered]{algorithm2e}
\makeatletter
\renewcommand{\algocf@caption@boxruled}{%
  \hrule
  \hbox to \hsize{%
    \vrule\hskip-0.4pt
    \vbox{   
       \vskip\interspacetitleboxruled%
       \unhbox\algocf@capbox\hfill
       \vskip\interspacetitleboxruled
       }%
     \hskip-0.4pt\vrule%
   }\nointerlineskip%
}%
\makeatother

\usepackage{etoolbox}
\newcounter{algoline}

\AtBeginEnvironment{algorithm}{\setcounter{algoline}{0}}


\title{Approximation by Lexicographically Maximal Solutions\\in Matching and Matroid Intersection Problems}
\author{
Krist\'of B\'erczi\thanks{MTA-ELTE Momentum Matroid Optimization Research Group and MTA-ELTE Egerv\'ary Research Group, Department of Operations Research, E\"otv\"os Lor\'and University, Budapest, Hungary. Email: \texttt{\{kristof.berczi,tamas.kiraly\}@ttk.elte.hu}} 
\and
Tam\'as Kir\'aly\footnotemark[1]
\and
Yutaro Yamaguchi\thanks{Department of Information and Physical Sciences, Graduate School of Information Science and Technology, Osaka University, Osaka, Japan. Email: \texttt{yutaro.yamaguchi@ist.osaka-u.ac.jp}} 
\and
Yu Yokoi\thanks{Principles of Informatics Research Division, National Institute of Informatics, Tokyo, Japan. Email: \texttt{yokoi@nii.ac.jp}}}

\date{\empty}

\begin{document}
\maketitle
\thispagestyle{empty}

\begin{abstract}
We study how good a lexicographically maximal solution is in the weighted matching and matroid intersection problems.
A solution is lexicographically maximal if it takes as many heaviest elements as possible, and subject to this, it takes as many second heaviest elements as possible, and so on.
If the distinct weight values are sufficiently dispersed, e.g., the minimum ratio of two distinct weight values is at least the ground set size, then the lexicographical maximality and the usual weighted optimality are equivalent.
We show that the threshold of the ratio for this equivalence to hold is exactly $2$.
Furthermore, we prove that if the ratio is less than $2$, say $\alpha$, then a lexicographically maximal solution achieves $(\alpha/2)$-approximation, and this bound is tight.

\bigskip

\noindent \textbf{Keywords:} Matching, Matroid intersection, Lexicographical maximality, Approximation ratio
\end{abstract}

\clearpage
\setcounter{page}{1}

\section{Introduction}
Matching in bipartite graphs is one of the fundamental topics in combinatorics and optimization.
Due to its diverse applications, various optimality criteria of matchings have been proposed based on the number of edges, the total weight of edges, etc.
The concept of \emph{rank maximality} is one of them, which is especially studied in the context of matching problems under preferences \cite{irving2006rank,michail2007reducing}.
In this setting, we are given a partition $\{E_1, E_2, \dots, E_k\}$ of the edge set $E$ that represents a priority order, and a matching $M \subseteq E$ is \emph{rank-maximal} if $|M \cap E_1|$ is maximized, and subject to this, $|M \cap E_2|$ is maximized, and so on.
As pointed out in several papers \cite{irving2006rank,michail2007reducing,kamiyama2013matroid,huang2019exact} (on generalized problems), the problem of finding a rank-maximal solution can be reduced to the usual weight-maximization setting by using sufficiently dispersed weights, e.g., by assigning $|E|^{k-i}$ for each element in $E_i$. 

In this paper, we explore the relation between rank-maximality and optimality in the weighted setting for two generalizations of bipartite matching: matching in general graphs and matroid intersection.
To avoid confusion, we hereafter replace the term ``rank'' with ``lex(icographical)'' because we also deal with ranks in matroids.
The main results of the paper (Theorems~\ref{thm:matching} and \ref{thm:matroid_intersection}) state that, in both problems, the equivalence between lex-maximality and weighted optimality holds if the minimum ratio of two distinct weight values is larger than $2$.
This implies that we can choose the base of exponential weights as any constant larger than $2$ (instead of $|E|$) in the aforementioned reduction.
Furthermore, we show that if the minimum ratio is at most $2$, say $\alpha$, then a lex-maximal solution achieves $(\alpha/2)$-approximation of the maximum weight, and this bound is tight.

\section{Results}
We assume the basic notation and terminology on graphs and matroids (see, e.g., \cite{schrijver2003combinatorial}).

Throughout the paper, let $E$ be a finite ground set and $w \colon E \to \RR_{> 0}$ be a positive weight function on $E$.
For a subset $X \subseteq E$, the weight $w(X)$ is defined as the sum of its elements.
Let $w_1, w_2, \dots, w_k$ be the distinct values of $w$ in descending order.
We assume $k \ge 2$ (otherwise, the results are trivial), and define
\[\alpha_w \coloneqq \min_{1 \le i \le k-1} \frac{w_{i}}{w_{i+1}}.\]
For a subset $X \subseteq E$ and $1 \leq i \leq k$, we denote by $X_i = \{\, e \in X \mid w(e) = w_i \,\}$ the restriction of $X$ to the elements of weight $w_i$, and define $X_{\le i} \coloneqq \bigcup_{j \le i} X_j$ and accordingly $X_{\ge i}$, $X_{< i}$, and $X_{> i}$.
For two subsets $X, Y \subseteq E$, we say that $X$ is \emph{lex-larger} than $Y$ (or $Y$ is \emph{lex-smaller} than $X$) if there exists an index $i$ such that $|X_j| = |Y_j|$ for any $j < i$ and $|X_i| > |Y_i|$.
For a family $\cF \subseteq 2^E$ of subsets of $E$, we say that $X \in \cF$ is \emph{lex-maximal} if no $Y \in \cF$ is lex-larger than $X$.
Note that a lex-maximal subset $X \in \cF$ may not be unique but the sequence $|X_1|, |X_2|, \dots, |X_k|$ is unique.

A \emph{weighted matching instance} consists of an undirected graph $G = (V, E)$ and a weight function $w$ on the edge set.
We denote by $\opt(G, w)$ the optimal value, i.e., the maximum weight of a matching in $G$.
In addition, we denote by $\lopt(G, w)$ the weight of a lex-maximal matching in $G$ (where $\cF$ is the family of matchings in $G$), which takes as many edges of weight $w_1$ as possible, and subject to this, takes as many edges of weight $w_2$ as possible, and so on.

A \emph{weighted matroid intersection instance} consists of two matroids $\bM_1 = (E, \cI_1)$ and $\bM_2 = (E, \cI_2)$ and a weight function $w$ on the common ground set.
We denote by $\opt(\bM_1, \bM_2, w)$ the optimal value, i.e., the maximum weight of a common independent set in $\cI_1 \cap \cI_2$.
In addition, we denote by $\lopt(\bM_1, \bM_2, w)$ the weight of a lex-maximal common independent set (where $\cF = \cI_1 \cap \cI_2$), which takes as many elements of weight $w_1$ as possible, and subject to this, takes as many elements of weight $w_2$ as possible, and so on.

The main theorems are stated as follows.

\begin{theorem}\label{thm:matching}
Let $(G, w)$ be a weighted matching instance.
If $\alpha_w \le 2$, then
\[\lopt(G, w) \ge \frac{\alpha_w}{2} \cdot \opt(G, w).\]
If $\alpha_w > 2$, then a lex-maximal matching is a maximum-weight matching, and vice versa.
\end{theorem}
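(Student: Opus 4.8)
Let $L$ be a lex-maximal matching and $O$ a maximum-weight matching, and consider the symmetric difference $L \triangle O$, which decomposes into vertex-disjoint alternating paths and even cycles whose edges alternate between $L \setminus O$ and $O \setminus L$. The plan is to set up a \emph{charging scheme} that assigns each edge of $O \setminus L$ to a heavier-or-equal neighbouring edge of $L \setminus O$, and to prove that no $L$-edge is overloaded. Concretely, I will show that every $e \in O \setminus L$ has an incident edge of $L \setminus O$ of weight at least $w(e)$ (so it can be charged), and that each $g \in L \setminus O$ collects total charge at most $\tfrac{2}{\alpha_w}\,w(g)$ when $\alpha_w \le 2$. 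Summing over $g$ and observing that edges of $L \cap O$ contribute equally to both sides (with coefficient $\tfrac{2}{\alpha_w} \ge 1$) yields $w(O) \le \tfrac{2}{\alpha_w}\,w(L)$, which is the first claim.

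\textbf{Local exchange inequalities.} The engine is that lex-maximality forbids any valid alternating swap from producing a lex-larger matching. First, if some $e = uv \in O \setminus L$ had both endpoints exposed by $L$, then $L + e$ would be a matching with one extra edge of weight $w(e)$ and hence lex-larger; so at least one endpoint is $L$-matched, and I charge $e$ to the heavier incident edge $g$ of $L \setminus O$ (note the incident $L$-edges cannot lie in $O$, since $O$ is a matching). The swap $L - g + e$ (or $L - f_u - f_v + e$ when both endpoints are matched) is a valid alternating-path swap, and ``not lex-larger'' forces $w(e) \le w(g)$. Because $O$ is a matching, each $g = xy$ is charged by at most two edges (one at $x$, one at $y$). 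The crucial refinement I must establish is:
\begin{claim*}
If $g \in L \setminus O$ is charged by two edges $e_x, e_y \in O \setminus L$, then $w(e_x) < w(g)$ and $w(e_y) < w(g)$.
\end{claim*}
Since distinct weight classes differ by a factor at least $\alpha_w$, strict inequality gives $w(e_x), w(e_y) \le w(g)/\alpha_w$, so the total charge on a doubly-charged $g$ is at most $\tfrac{2}{\alpha_w} w(g)$, while a singly-charged $g$ trivially receives at most $w(g) \le \tfrac{2}{\alpha_w} w(g)$. This is \emph{the main obstacle}. When the far endpoints $x', y'$ of $e_x, e_y$ are $L$-exposed, the swap $L - g + e_x + e_y$ is valid and removes one edge of class $w(g)$ while adding two edges of weight $\le w(g)$; a short case check on the class with the first nonzero count change shows it is lex-larger unless both added edges are strictly lighter than $g$ — giving the claim. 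The genuinely delicate case is when $x'$ or $y'$ is $L$-matched (so the three-edge swap is invalid): here I plan to choose the charging greedily along each component and, if a forced two-equal-weight charge remained, exhibit a lex-improving \emph{whole-subpath} swap (replacing the $L$-edges of an initial segment of the component by its $O$-edges), which increases the count at class $w(g)$ and thus contradicts lex-maximality. Making this subpath argument airtight, including the tie-breaking among equal-weight neighbours, is where the bulk of the work lies.

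\textbf{The threshold $2$ and the converse.} The same charging, read in the regime $\alpha_w > 2$, gives the equivalence. Now each single charge is at most $w(g)$ and each doubly-charged $g$ receives strictly less than $w(g)$ (both chargers being below $w(g)/2$), so summing yields $w(O \setminus L) \le w(L \setminus O)$, hence $w(O) \le w(L)$; since $O$ is optimal, $L$ is optimal too. For the converse, suppose $O$ is maximum-weight; applying the estimate to a lex-maximal $L$ forces $w(O) = w(L)$, so the inequality above holds with equality. Equality rules out any strict (doubly-charged) contribution, so the charging becomes a class-preserving injection of $O \setminus L$ into $L \setminus O$ that must in fact be a class-preserving bijection, giving $|O_i| = |L_i|$ for every $i$ and therefore that $O$ is itself lex-maximal.

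\textbf{Tightness.} Finally I will verify that the factor $\alpha_w/2$ cannot be improved, using a two-class instance on four vertices: an edge $a_1b_1$ of weight $w_1$ together with edges $a_1b_2$ and $a_2b_1$ of weight $w_2 = w_1/\alpha_w$. The lex-maximal matching is $\{a_1b_1\}$ of weight $w_1$, whereas $\{a_1b_2, a_2b_1\}$ has weight $2w_2$, and when $\alpha_w \le 2$ the latter is optimal, so $\lopt/\opt = w_1/(2w_2) = \alpha_w/2$. This instance is exactly the obstruction appearing in the Claim above, which reassures that the charging analysis is tight edge-by-edge.
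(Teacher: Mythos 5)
Your route — a one-shot charging argument on $L \triangle O$ — is genuinely different from the paper's proof, which iteratively transforms an optimal matching into a lex-maximal one via ``eligible'' improvements (gain one class-$i$ edge, lose at most two lighter edges) and sums the per-step losses $2w_{i+1} - w_i \le \tfrac{2-\alpha_w}{\alpha_w}w_i$. Unfortunately, the Claim on which your entire accounting rests is false, and it fails exactly in the case you defer as ``where the bulk of the work lies.'' Take the path $v_0v_1v_2v_3v_4$ with $g' = v_0v_1$, $e_x = v_1v_2$, $g = v_2v_3$, $e_y = v_3v_4$, weights $w(g) = w(e_x) = 4$ and $w(g') = w(e_y) = 2$, so $\alpha_w = 2$. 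Let $L = \{g', g\}$ and $O = \{e_x, e_y\}$. Every matching of this path has at most one edge of weight $4$ and at most two edges total, so both $L$ and $O$ realize the lex-maximal profile (one edge of weight $4$, one of weight $2$); in particular $L$ is lex-maximal and $O$ is maximum-weight. Your charging is then forced: $e_x$ cannot charge $g'$ (which is lighter than $e_x$), so it charges $g$, and $e_y$ has no $L$-neighbour other than $g$. Hence $g$ is doubly charged with one charger of weight equal to $w(g)$, contradicting your Claim, and $g$ receives total charge $6 > \tfrac{2}{\alpha_w}w(g) = 4$. No tie-breaking and no lex-improving ``whole-subpath swap'' can rescue this, because $L$ genuinely is lex-maximal; the per-edge inequality you want simply does not hold.

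The global bound survives in that example only because $g'$ receives no charge and its unused allowance $\tfrac{2}{\alpha_w}w(g')$ covers the overload on $g$. So to keep the charging framework you would have to aggregate at least over whole components (or subpaths) of $L \triangle O$ and prove something like $w(O \cap C) \le \tfrac{2}{\alpha_w}\,w(L \cap C)$ per component, which requires tracking how the weight classes interleave along an alternating path and is no longer a local exchange argument; this is the missing idea. The paper avoids the issue entirely: its Lemma~\ref{lem:matching} only asserts the existence of a single lexicographic improvement sacrificing at most two lighter edges, and the factor $\alpha_w/2$ emerges from bounding how many times each class can be improved (at most $|Y_i^*|$ times for class $i$). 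Your $\alpha_w > 2$ equivalence and the converse also lean on the same false Claim, and your tightness example, while correct, is already the one given in the paper.
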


\begin{theorem}\label{thm:matroid_intersection}
Let $(\bM_1, \bM_2, w)$ be a weighted matroid intersection instance.
If $\alpha_w \le 2$, then
\[\lopt(\bM_1, \bM_2, w) \ge \frac{\alpha_w}{2} \cdot \opt(\bM_1, \bM_2, w).\]
If $\alpha_w > 2$, then a lex-maximal common independent set is a maximum-weight common independent set, and vice versa.
\end{theorem}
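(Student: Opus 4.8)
The plan is to fix a lex-maximal common independent set $L$ and a maximum-weight common independent set $O$, and to derive both assertions from a single analysis of the symmetric difference $L \triangle O$. First I would record the elementary but crucial consequence of lex-maximality: adjoining any single $f \in O \setminus L$ to $L$ strictly increases the lex-order, so $L + f \notin \cI_1 \cap \cI_2$ for every such $f$; thus $L$ is an inclusion-wise maximal common independent set, and the whole difficulty is to control how much weight $O$ can recover at the lighter levels in exchange for the heavier elements it lacks. The arithmetic governing the threshold is geometric: scaling so that $w_{i+1} \le w_i/\alpha_w$, the tail of a descending chain of lighter elements is controlled by $\sum_{t \ge 1}\alpha_w^{-t} = \tfrac{1}{\alpha_w - 1}$, which is $< 1$ exactly when $\alpha_w > 2$. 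Hence for $\alpha_w > 2$ a single element of weight $w_i$ dominates an entire chain of strictly lighter ones, while for $\alpha_w \le 2$ a lost heavy element can be overcompensated by lighter gains, but only up to the factor $2/\alpha_w$ — matching the claimed ratio and its tight instance, a singleton $\{a\}$ of weight $w_1$ versus a pair $\{b,c\}$ of weight $w_2$.

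Next I would convert the comparison of $L$ and $O$ into a family of independently swappable alternating pieces. For matchings (the analogous Theorem~\ref{thm:matching}) this is immediate, since $L \triangle O$ is a disjoint union of paths and cycles. For matroid intersection I would instead invoke the standard exchange machinery: combining the two exchange bijections on $L \triangle O$ (one certifying independence in $\bM_1$, one in $\bM_2$) produces a subgraph of maximum degree two, whose components are alternating paths and cycles along which a full or partial swap again yields a member of $\cI_1 \cap \cI_2$. This is precisely where matroid intersection is genuinely harder than matching, and I would rely on the known ``unique perfect matching implies feasible exchange'' lemmas to guarantee that these swaps preserve common independence.

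With the pieces in hand, I would bound, for each piece $P$, the ratio $w(O \cap P)/w(L \cap P)$ by $2/\alpha_w$, and this is the main obstacle. A purely count-based argument fails: the fact that a piece carries almost equally many $L$- and $O$-elements does not by itself forbid $O$ from placing all of its elements at one heavy level while $L$'s elements sit at negligible weight, which would blow the ratio up without bound. The resolution is that such a configuration would contradict lex-maximality of $L$ — a \emph{partial} swap keeping the heaviest $L$-element of $P$ while importing the heavy $O$-elements would be lex-larger. I would therefore apply lex-maximality against these partial swaps, processing levels from the top, to show that at each level $O$ can exceed $L$ only by borrowing against strictly heavier $L$-elements it must relinquish, in an at-most-two-for-one ratio; together with $w_{i+1} \le w_i/\alpha_w$ this gives $w(O \cap P) \le \tfrac{2}{\alpha_w}\, w(L \cap P)$. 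Summing over the pieces and over the common part $L \cap O$ (where the ratio is $1 \le 2/\alpha_w$) yields $\opt(\bM_1,\bM_2,w) = w(O) \le \tfrac{2}{\alpha_w}\, w(L) = \tfrac{2}{\alpha_w}\,\lopt(\bM_1,\bM_2,w)$, the first claim.

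Finally, for $\alpha_w > 2$ the same estimate gives $w(O) \le \tfrac{2}{\alpha_w}\, w(L) < w(L)$ unless no piece contributes a net gain, which forces $w(O) = w(L)$; hence every lex-maximal set is maximum-weight. For the converse I would use the geometric domination directly: if a maximum-weight $O$ failed to be lex-maximal, some $Y \in \cI_1 \cap \cI_2$ would be lex-larger, first differing at a level $i$ with $|Y_i| > |O_i|$, and then importing one extra element of weight $w_i$ via an augmenting exchange would outweigh the entire lighter tail it could cost, contradicting the maximality of $w(O)$. This establishes the equivalence for $\alpha_w > 2$, and the singleton-versus-pair instance shows that both the threshold $2$ and the ratio $2/\alpha_w$ are best possible.
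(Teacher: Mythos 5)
There is a genuine gap at the heart of your argument: the decomposition of $L \triangle O$ into independently swappable alternating pieces does not exist for matroid intersection. The standard exchange lemmas give, for each $\bM_p$, a perfect matching between $L \setminus O$ and $O \setminus L$ in the exchangeability graph of $\bM_p$, but the converse direction (``a perfect matching certifies independence of the swapped set'') requires that matching to be \emph{unique}, and uniqueness fails both globally and on individual components of the overlaid degree-$\le 2$ graph. Consequently, neither a full nor a partial swap along one component is guaranteed to land in $\cI_1 \cap \cI_2$; this is exactly why augmenting paths for weighted matroid intersection must be chosen as shortest cheapest paths admitting no shortcut, rather than read off the symmetric difference of two given solutions. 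Since your per-piece ratio bound and your appeal to lex-maximality ``against partial swaps'' both presuppose these swaps are feasible, the main estimate $w(O \cap P) \le \tfrac{2}{\alpha_w} w(L \cap P)$ is unsupported. A second, related gap is quantitative: the entire threshold analysis rests on the assertion that importing one element of weight $w_i$ costs at most two strictly lighter elements (your ``at-most-two-for-one ratio''), but this is asserted rather than proved, and it is precisely the nontrivial content of the problem.

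The paper fills both holes with a different architecture. It proves an \emph{eligibility} lemma (Lemma~\ref{lem:matroid_intersection}): any non-lex-maximal $X \in \cI_1 \cap \cI_2$ admits a $Y \in \cI_1 \cap \cI_2$ matching $X$ on levels $j < i$, gaining one element at level $i$, and losing at most two elements of $X_{>i}$. The two-element bound comes from applying the $\ell$-extreme augmenting-path theorem (Lemma~\ref{lem:extreme}) to the truncated instance on $E_{\le i}$ with an auxiliary exponential weight, and then a span/rank computation showing $\sp_1(X_{\le i}) \subseteq \sp_1(Y')$ and $\sp_2(X_{\le i}) \subseteq \sp_2(Y')$, so that $Y' \cup X_{>i}$ contains at most one circuit in each matroid and hence at most two elements of $X_{>i}$ need to be discarded. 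The global accounting then runs in the direction opposite to yours: starting from an optimal $X^*$, one repeatedly replaces $X$ by an eligible $Y$ until reaching a lex-maximal $Y^*$, losing at most $\tfrac{2-\alpha_w}{\alpha_w} w_i$ per step and charging each step to a distinct element of $Y^*$; this yields both the $\alpha_w/2$ ratio and, for $\alpha_w > 2$, the strict weight increase that forces $X^*$ itself to be lex-maximal. If you want to salvage a direct $L$-versus-$O$ comparison, you would still need the eligibility lemma (or an equivalent) to make the exchanges legitimate, at which point the paper's iterative argument is the shorter path.
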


We remark that the approximation ratio is tight as the weighted bipartite matching problem is included as a common special case.
Let $G = (V, E)$ be a bipartite graph with $V = \{1, 2, 3, 4\}$ and $E = \{\,e_1 = \{1, 3\},\, e_2 = \{2, 3\},\, e_3 = \{2, 4\}\,\}$.
Define $w(e_1) = 1$, $w(e_2) = x \in (1, 2]$, and $w(e_3) = 1$.
We then have $\alpha_w = x$ (as $w_1 = x$ and $w_2 = 1$), $\opt(G, w) = 2$, and $\lopt(G, w) = x$.
We also remark that, when $x = 2$, there are two maximum-weight matchings $\{e_2\}$ and $\{e_1, e_3\}$; the former is lex-maximal whereas the latter is not.
This means that ``and vice versa'' is not true when $\alpha_w = 2$ in both theorems.

\section{Proofs}
We prove both theorems using the same strategy.
The key definition and lemmas are as follows.

\begin{definition}\label{def:eligible}
Let $X \subseteq E$ be a feasible solution that is not lex-maximal in a weighted matching or matroid intersection instance, and $i$ be the smallest index such that $X_{\le i}$ is not lex-maximal in the restricted instance whose ground set is $E_{\le i}$.
We say that a feasible solution $Y \subseteq E$ in the original instance is \emph{eligible} if the following three conditions are satisfied:
\begin{itemize}
\item $|Y_j| = |X_j|$ for any $j < i$,
\item $|Y_i| = |X_i| + 1$, and
\item $|X_{> i} \setminus Y_{> i}| \le 2$.
\end{itemize}
\end{definition}

Intuitively, an eligible solution lexicographically improves the original solution at the most significant improvable class by sacrificing at most two lighter elements.

\begin{lemma}\label{lem:matching}
For any weighted matching instance $(G, w)$ and any matching $X$ that is not lex-maximal, there exists an eligible matching $Y$.
\end{lemma}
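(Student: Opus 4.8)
The plan is to reduce the statement to finding a single alternating augmenting path inside the ``heavy'' subgraph on $E_{\le i}$ and then to lift that path to the whole graph, where the two endpoints of the path account for the at most two lighter edges we are allowed to sacrifice. First I would record why the improvement is localized at class $i$. By the choice of $i$, the prefix $X_{<i}$ is lex-maximal on $E_{<i}$, and a short exchange argument shows that the lex-maximal profile on $E_{\le i}$ agrees with the one on $E_{<i}$ in its first $i-1$ coordinates: dropping all class-$i$ edges from a lex-maximal matching of $E_{\le i}$ exhibits one inequality, and viewing a lex-maximal matching of $E_{<i}$ as a matching of $E_{\le i}$ exhibits the other. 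Writing $n_j$ for the lex-maximal count of class $j$, this yields $|X_j| = n_j$ for every $j < i$, while $|X_i| < n_i$ since $X_{\le i}$ is not lex-maximal.

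Next I would fix a lex-maximal matching $Z$ of the restricted instance on $E_{\le i}$, so that $|Z_j| = n_j$ for all $j \le i$, and analyze the symmetric difference $D = X_{\le i}\,\triangle\,Z$, which is a disjoint union of $X/Z$-alternating paths and cycles. For a component $C$ and a class $j$, let the \emph{net change} be the number of $Z$-edges of $C$ in class $j$ minus the number of $X$-edges of $C$ in class $j$. The heart of the argument, and the step I expect to be the main obstacle, is to show that every component is \emph{pure} in the heavy classes, i.e.\ its net change is $0$ in each class $j < i$; only then can the single class-$i$ gain be isolated on one component.

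To prove purity, suppose some component $C$ has a nonzero net change in a heavy class, and let $j^\ast < i$ be the smallest such class. If the net change there is positive, then flipping $C$ inside $X_{\le i}$ (forming $X_{\le i}\,\triangle\,C$) produces a matching whose class-$j^\ast$ count exceeds $n_{j^\ast}$ while the counts of all heavier classes $j < j^\ast$ stay at $n_j$, contradicting that $n_{j^\ast}$ is the maximum attainable class-$j^\ast$ count on $E_{\le j^\ast}$; if the net change is negative, then flipping $C$ inside $Z$ pushes the class-$j^\ast$ count above $n_{j^\ast}$ with the same contradiction. Hence every component preserves all heavy-class counts. Since the total class-$i$ net change equals $n_i - |X_i| \ge 1 > 0$, some component $P$ has positive class-$i$ net change; a cycle would have total net change $0$ and hence, being heavy-pure, class-$i$ net change $0$, so $P$ must be a path. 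As a path has total net change $0$ or $\pm 1$, it is an $X_{\le i}$-augmenting path that gains exactly one class-$i$ edge and has zero net change in every heavier class.

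Finally I would lift $P$ to the whole graph. Because $P$ is $X_{\le i}$-augmenting, its two endpoints are exposed by $X_{\le i}$ while all interior vertices are matched by $X_{\le i}$; consequently the only edges of $X$ that can conflict with $X\,\triangle\,P$ are the at most two lighter edges of $X_{>i}$ incident to those endpoints. Setting $Y := X\,\triangle\,P$ and discarding these conflicting edges yields a matching of $G$ with $|Y_j| = |X_j|$ for $j < i$, with $|Y_i| = |X_i| + 1$, and with $|X_{>i} \setminus Y_{>i}| \le 2$, i.e.\ an eligible matching. The prefix property and the endpoint count are routine; the genuinely delicate point is the purity claim, which is exactly where the double use of lex-maximality (of the prefix of $X$ and of $Z$) does the work.
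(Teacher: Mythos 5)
Your proof is correct, and it is essentially the alternative argument that the paper itself sketches in the Remark following its main proof of Lemma~\ref{lem:matching}, rather than the paper's primary proof. The primary proof is a minimal-counterexample argument: a counterexample is taken with $|E|$ minimum and then $|X \triangle Z|$ minimum for a lex-maximal matching $Z$; one reduces to $E = X \triangle Z$, shows by contracting a pair of adjacent heaviest edges that no two weight-$w_1$ edges are adjacent (Claim~\ref{cl:same_matching}), deduces $X_1 = \emptyset$, and then inserts a weight-$w_1$ edge of $Z$ into $X$ at the cost of its at most two neighbours. Your route---decompose $X_{\le i} \triangle Z$ for a lex-maximal $Z$ on $E_{\le i}$, show every component has zero net change in each class $j < i$, extract a path gaining exactly one class-$i$ edge, and discard the at most two $X_{>i}$-edges at its endpoints---is exactly the Remark's augmenting-path proof. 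One small simplification is available in your purity step: in both cases (positive or negative net change at $j^*$) the flipped matching agrees with the lex-maximal profile on all classes $j < j^*$ and beats it on class $j^*$, so it already contradicts the lex-maximality of $Z$ on $E_{\le i}$ directly; the detour through the restricted instance $E_{\le j^*}$, and hence the cascading identification of the profiles $n_j$ across all restrictions, is not actually needed. The augmenting-path viewpoint has the merit of being constructive and of paralleling the paper's matroid intersection proof, while the minimal-counterexample proof is shorter but exploits matching-specific structure.
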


\begin{lemma}\label{lem:matroid_intersection}
For any weighted matroid intersection instance $(\bM_1, \bM_2, w)$ and any common independent set $X$ that is not lex-maximal, there exists an eligible common independent set $Y$.
\end{lemma}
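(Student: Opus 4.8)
The plan is to reduce the lemma to a single augmentation inside the ``heavy'' restricted instance on $E_{\le i}$ and then to re-insert the light elements $X_{>i}$ while destroying at most two of them. First I would record the structural consequences of the choice of $i$. By minimality of $i$, the set $X_{<i}$ is lex-maximal in the instance restricted to $E_{<i}$; since the achievable profiles on classes $1,\dots,i-1$ are the same for the ground sets $E_{<i}$ and $E_{\le i}$ (one may always discard the class-$i$ elements), $X_{<i}$ remains optimal for those classes in $E_{\le i}$, so the failure of lex-maximality of $X_{\le i}$ must be located in class $i$. Hence there is a common independent set $Z\subseteq E_{\le i}$ with $|Z_j|=|X_j|$ for $j<i$ and $|Z_i|=|X_i|+1$. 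Writing $s_0\coloneqq|X_{\le i}|$, this says that $X_{\le i}$ is a lex-maximal common independent set of size $s_0$ in $E_{\le i}$, while the lex-maximal size-$(s_0+1)$ profile is exactly $p\coloneqq(|X_1|,\dots,|X_{i-1}|,|X_i|+1)$.

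The core step is to produce a single augmenting path $P\subseteq E_{\le i}$ in the exchange graph of $X_{\le i}$ (with respect to $\bM_1,\bM_2$ restricted to $E_{\le i}$), with source and sink sets $S_1=\{\,e\notin X_{\le i}:X_{\le i}+e\in\cI_1\,\}$ and $S_2=\{\,e\notin X_{\le i}:X_{\le i}+e\in\cI_2\,\}$, such that $X'_{\le i}\coloneqq X_{\le i}\,\triangle\,V(P)$ has profile $p$ and $P$ is a shortest augmenting path (no shortcuts). I would obtain the profile constraint from lexicographic optimization: giving each class $j$ the weight $N^{k-j}$ with $N>|E|$ makes the weight order agree with the lex order on sets of equal size, so $X_{\le i}$ is a maximum-weight common independent set of size $s_0$, and a maximum-weight common independent set of size $s_0+1$, obtained from $X_{\le i}$ by a minimum-cost augmenting path, necessarily has profile $p$. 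Choosing such a path with the fewest arcs retains the no-shortcut structure. On the heavy classes this already realizes $|X'_j|=|X_j|$ for $j<i$ and $|X'_i|=|X_i|+1$.

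Next I would bound the collateral damage on $X_{>i}$ using the no-shortcut structure. Write $P$ as $v_0,u_1,v_1,\dots,u_\ell,v_\ell$ with $v_0\in S_1$, $v_\ell\in S_2$, $v_j\notin X_{\le i}$, $u_j\in X_{\le i}$, so that $X'_{\le i}=(X_{\le i}\setminus\{u_1,\dots,u_\ell\})\cup\{v_0,\dots,v_\ell\}$. For a shortest path each internal $v_j$ satisfies $X_{\le i}+v_j\notin\cI_1$, whence $v_j\in\cl(X_{\le i})$ in $\bM_1$ with $u_j\in C_1(v_j,X_{\le i})$; consequently $X'_{\le i}\setminus\{v_0\}$ is an $\bM_1$-independent subset of $\cl(X_{\le i})$ of size $|X_{\le i}|$, hence a basis of $\cl(X_{\le i})$, giving $X_{\le i}\subseteq\cl(X'_{\le i})$. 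Therefore $\cl(X'_{\le i}\cup X_{>i})\supseteq X$, so the $\bM_1$-rank $r_1(X'_{\le i}\cup X_{>i})\ge|X|=|X'_{\le i}\cup X_{>i}|-1$; since $X'_{\le i}$ is itself independent, at most one element of $X_{>i}$ is $\bM_1$-dependent over $X'_{\le i}$. The symmetric argument with $X'_{\le i}\setminus\{v_\ell\}$ in $\bM_2$ bounds the $\bM_2$-loss by one.

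Finally I would assemble $Y$: extend $X'_{\le i}$ inside $X_{>i}$ by first deleting the (at most one) $\bM_1$-dependent element and then the (at most one) remaining $\bM_2$-dependent element; as deleting elements only decreases each single-matroid deficiency, this yields $Y\in\cI_1\cap\cI_2$ with $Y_{\le i}=X'_{\le i}$ and $|X_{>i}\setminus Y_{>i}|\le2$, i.e.\ an eligible common independent set. The main obstacle is the core step: guaranteeing \emph{simultaneously} the exact profile $p$ (no lower class may increase, which lex-maximality of $X_{<i}$ forbids, but none may decrease either, which the weighting enforces) and the no-shortcut structure that the spanning argument needs. I expect the cleanest route is to run the weighted matroid intersection augmentation and verify that a minimum-cost, fewest-arc augmenting path out of the lex-maximal size-$s_0$ set enjoys both properties.
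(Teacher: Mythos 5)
Your proposal is correct and follows essentially the same route as the paper: reduce to the restricted instance on $E_{\le i}$ with exponential weights, augment the $\ell$-extreme set $X_{\le i}$ along a shortest (no-shortcut) minimum-cost source--sink path to obtain the profile $(|X_1|,\dots,|X_{i-1}|,|X_i|+1)$, deduce $\sp_1(X_{\le i})\subseteq\sp_1(Y')$ and $\sp_2(X_{\le i})\subseteq\sp_2(Y')$ from the fact that the non-source (resp.\ non-sink) vertices of the path lie in the corresponding span, and finally delete at most one element of $X_{>i}$ per matroid. Your ``basis of the closure'' phrasing of the span step is only a cosmetic variant of the paper's rank-chain computation, and the ``core step'' you flag as the main obstacle is exactly the cited augmentation lemma (Lemma~\ref{lem:extreme}) that the paper also takes from Schrijver.
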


We here prove Theorems~\ref{thm:matching} and \ref{thm:matroid_intersection} using Lemmas~\ref{lem:matching} and \ref{lem:matroid_intersection}, whose proofs are provided later.

\begin{proof}[Proof of Theorems~\ref{thm:matching} and \ref{thm:matroid_intersection}]
Let $X^*$ be an optimal solution.
Starting with $X = X^*$, we repeatedly update $X$ to an eligible $Y$ until it becomes lex-maximal.
Let $Y^*$ be the lex-maximal solution that is finally obtained.

In any update from $X$ to $Y$, we have
\[w(Y) = w(Y_{\le i}) + w(Y_{> i}) \ge w(X_{\le i}) + w_i + w(X_{> i}) - 2w_{i+1} \ge w(X) - \frac{2 - \alpha_w}{\alpha_w} \cdot w_i.\]
If $\alpha_w > 2$, then we have $w(Y) > w(X)$, which cannot happen at the beginning when $X = X^*$.
Thus, $X^*$ is lex-maximal.
As any lex-maximal solution has the same weight, we also see that any lex-maximal solution is optimal, and we are done with the second statements.

Suppose that $\alpha_w \le 2$.
Since we always have $|Y_j| = |X_j| = |Y_j^*|$ $(j < i)$ and $|Y_i| = |X_i| + 1 \le |Y_i^*|$, we see that $i$ is nondecreasing during the process and each $w_i$ appears on the right-hand side at most $|Y_i^*|$ times in total.
Thus, by repeating the above inequalities, we obtain
\[w(Y^*) \ge w(X^*) - \frac{2 - \alpha_w}{\alpha_w}\sum_{i = 1}^k (w_i \cdot |Y_i^*|) = w(X^*) - \frac{2 - \alpha_w}{\alpha_w} \cdot w(Y^*),\]
which implies the first statements.
\end{proof}

\subsection{Matching: Proof of Lemma~\ref{lem:matching}}
We prove Lemma~\ref{lem:matching} by contradiction.
Suppose to the contrary that there exists a counterexample, and take a minimal one in the following sense.
First, the ground set $E$ is minimized as the first priority, and subject to this, a counterexample matching $X$ (that is not lex-maximal but admits no eligible matching $Y$) and a lex-maximal matching $Z$ are taken so that the symmetric difference $X \triangle Z = (X \setminus Z) \cup (Z \setminus X)$ is minimized.

By the minimality, we have $E = X \cup Z  = X \triangle Z$.
Indeed, if there exists an edge $e \in E \setminus (X \cup Z)$, then we obtain a smaller counterexample by removing $e$ (no eligible matching can newly appear), a contradiction. Similarly, if there exists $e \in X \cap Z$, then we obtain a smaller counterexample again by removing $e$ (note that no edge is adjacent to $e$ as $X$ and $Z$ are matchings), a contradiction.

\begin{claim}\label{cl:same_matching}
There are no adjacent edges of the heaviest weight. 
\end{claim}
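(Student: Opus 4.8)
The plan is to argue entirely within the minimal counterexample, where we have already established $E = X \cup Z = X \triangle Z$, so that $X$ and $Z$ are disjoint matchings whose union is all of $E$; consequently $G$ decomposes into vertex-disjoint paths and even cycles whose edges alternate between $X$ and $Z$. Write $E_1 = X_1 \cup Z_1$ for the heaviest edges and let $G_1 = (V, E_1)$ be the spanning subgraph they form. Suppose, for contradiction, that two heaviest edges are adjacent. Since $X$ and $Z$ are matchings, these two edges lie one in $X_1$ and one in $Z_1$, so $G_1$ has a component with at least two edges; as a subgraph of $G$ this component is an alternating path or an even alternating cycle.

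First I would dispose of cycles. If $G_1$ contains an even alternating cycle $C$, then every vertex of $C$ already has degree two in $G_1$ and hence no edge outside $C$, so replacing $Z_1 \cap C$ by $X_1 \cap C$ turns $Z$ into another matching $Z'$. Since this move only permutes weight-$w_1$ edges, $Z'$ has the same profile $|Z'_j| = |Z_j|$ and is therefore still lex-maximal, while $X \cap Z' \supseteq X_1 \cap C \neq \emptyset$ gives $|X \triangle Z'| < |X \triangle Z|$, contradicting minimality.

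The core of the argument then treats a maximal alternating path $Q$ of heaviest edges, with endpoints $a$ (whose $Q$-edge lies in $X_1$) and $b$ (whose $Q$-edge lies in $Z_1$). The key point is that flipping $Z$ along $Q$ (exchanging $Z_1 \cap Q$ for $X_1 \cap Q$) keeps $Z$ lex-maximal, because it only moves weight-$w_1$ edges, and yields a matching provided $a$ has no lighter $Z$-edge; symmetrically, flipping $X$ along $Q$ yields a matching provided $b$ has no lighter $X$-edge, and it keeps $X$ a counterexample. Here I would use that exchanging weight-$w_1$ edges leaves the profile of $X$ unchanged, hence leaves the critical index $i$ and all three eligibility conditions unchanged: the moved edges lie in class $1 \le i$, so $X_{>i}$ is untouched, and therefore the modified $X$ still admits no eligible matching and is still not lex-maximal. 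Either flip creates a nonempty $X$–$Z$ overlap and again contradicts the minimality of $|X \triangle Z|$; note that whenever $Q$ reaches an end of its component one of $a, b$ has degree one in $G$, so one of the two flips always applies there.

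The main obstacle is the remaining case in which $Q$ sits in the interior of its component, with $a$ carrying a lighter $Z$-edge $g$ and $b$ a lighter $X$-edge $h$, so neither flip gives a matching without also deleting $g$ or $h$ — which would lower the profile and destroy the lex-maximality of $Z$ (respectively change the profile of $X$). A preliminary augmenting-path argument shows $|X_1| = |Z_1|$ (otherwise one augments $X_1$ inside $G_1$ and discards at most two conflicting lighter edges to obtain an eligible matching, a contradiction), so $X_1$ and $Z_1$ are both maximum matchings of $G_1$; this forces the unique maximum matching of $Q$ missing $b$ to be $X_1 \cap Q$ and the one missing $a$ to be $Z_1 \cap Q$, confirming that no local reflip can create overlap. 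I expect this to be the crux, and I would resolve it by rerouting $X$ and $Z$ simultaneously across the flanking edges $g$ and $h$, so that the weight-$w_1$ edges of $Q$ become shared between the new $X$ and $Z$ while the deleted and re-added lighter edges compensate to preserve both the profile of $X$ and the lex-maximality of $Z$; failing a profile-preserving exchange, I would instead combine $g$, $h$, and $Q$ into an alternating structure that produces an eligible matching for $X$, contradicting the assumption that none exists.
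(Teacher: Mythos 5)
Your setup is sound as far as it goes: inside the minimal counterexample $E = X \triangle Z$, two adjacent heaviest edges must indeed be one from $X_1$ and one from $Z_1$, and your disposal of the even-cycle components of $G_1$, and of the maximal $w_1$-paths whose blocking endpoint is free, via a profile-preserving flip of $Z$ (or of $X$) that strictly decreases $|X \triangle Z|$, is correct. But the proof has a genuine hole exactly where you say you ``expect the crux'' to be: the case of an interior maximal $w_1$-path $Q$ whose endpoint $a$ carries a lighter $Z$-edge $g$ and whose endpoint $b$ carries a lighter $X$-edge $h$. There you offer only intentions --- ``I would resolve it by rerouting\dots failing a profile-preserving exchange, I would instead combine $g$, $h$, and $Q$ into an alternating structure'' --- with no actual construction. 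This is not a minor detail: flipping $Z$ along $Q$ forces the deletion of $g$, which strictly lowers the profile of $Z$ at some level $j > 1$, and there is no obvious edge of weight at least $w_j$ to add back (the natural candidate $h$ may have a different weight and may conflict at its other endpoint), so lex-maximality of the modified $Z$ is genuinely in doubt; the symmetric difficulty blocks the flip of $X$. The claim is therefore not proved. (A smaller ordering issue: you introduce $Q$ with endpoints of opposite types before establishing $|X_1| = |Z_1|$; odd path components of $G_1$ are only excluded once that count is known.)

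The paper escapes precisely this case by a mechanism your argument never invokes: the minimality of the ground set $E$, which is the first priority in the choice of counterexample. Given adjacent heaviest edges $e_1 \in X$ and $e_2 \in Z$ meeting at $v$, it contracts both (merging their outer endpoints $u_1, u_2$ and deleting $v$), applies the minimality of $|E|$ to obtain an eligible matching $Y'$ in the smaller instance for $X \setminus \{e_1\}$, and lifts $Y'$ back by re-inserting $e_1$ or $e_2$; eligibility survives because the re-inserted edge has weight $w_1$ and hence lies in $Y_{\le i}$ for every $i \ge 1$. To complete your route you would need either a concrete simultaneous exchange for the interior case or a reduction of that case to a smaller instance in the spirit of this contraction.
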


\begin{proof}
Suppose to the contrary that $e_1 = \{v, u_1\} \in X$ and $e_2 = \{v, u_2\} \in Z$ are adjacent (at $v$) and $w(e_1) = w(e_2) = w_1$.
If $u_1 = u_2$ (i.e., $e_1$ and $e_2$ are parallel), then $X' = (X \setminus \{e_1\}) \cup \{e_2\}$ is a smaller counterexample, a contradiction.
Otherwise, consider the instance obtained by contracting the two edges $e_1$ and $e_2$, i.e., by merging $u_1$ and $u_2$ into a single vertex, removing the vertex $v$ together with the incident edges, and restricting the weight function to the remaining set of edges.

As $X$ is a matching that is not lex-maximal, this is true for $X' = X \setminus \{e_1\}$ after the contraction.
By the minimality of the counterexample, there exists an eligible matching $Y' \subseteq E \setminus \{e_1, e_2\}$.
Since at most one of $u_1$ and $u_2$ is matched by $Y'$ in the original graph, we can add $e_1$ or $e_2$ to $Y'$ to obtain a matching $Y$ in the original instance.
Moreover, the $Y$ thus obtained is eligible by definition even if $e_2$ is added since $e_1 \in X_{\le i}$ for any $i \ge 1$, contradicting our indirect assumption.
\end{proof}

\begin{claim}\label{cl:larger_matching}
$X_1 = \emptyset$. 
\end{claim}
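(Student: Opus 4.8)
The plan is to show that the entire heaviest class $E_1$ already forms a matching, and then to compare $X$ and $Z$ on this class using the lex-maximality of $Z$.

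First I would upgrade Claim~\ref{cl:same_matching} to the statement that the whole heaviest class $E_1 = X_1 \cup Z_1$ is a matching. Two adjacent edges of weight $w_1$ would have to lie both in $X$, both in $Z$, or one in each. The first two cases are impossible because $X$ and $Z$ are matchings, while the third is exactly what Claim~\ref{cl:same_matching} forbids; since $E = X \cup Z$, these three cases are exhaustive. Hence no two weight-$w_1$ edges are adjacent, so $E_1$ itself is a matching.

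Next I would invoke the lex-maximality of $Z$. Because $Z$ is lex-maximal, $|Z_1|$ equals the maximum number of weight-$w_1$ edges contained in any matching of $G$: if some matching had strictly more weight-$w_1$ edges, it would be lex-larger than $Z$ already at index $1$. But $E_1$ is a matching consisting of \emph{all} $|E_1|$ weight-$w_1$ edges, so this maximum is exactly $|E_1|$, giving $|Z_1| = |E_1|$. Finally, since $X \cap Z = \emptyset$ and $E_1 = X_1 \cup Z_1$ is therefore a disjoint union, we have $|E_1| = |X_1| + |Z_1|$; combining with $|Z_1| = |E_1|$ yields $|X_1| = 0$, i.e.\ $X_1 = \emptyset$.

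I expect the only real subtlety to be the first step, namely carefully observing that Claim~\ref{cl:same_matching} (proved only for the mixed $X$/$Z$ case) in fact forces the entire class $E_1$ to be a matching, using that both $X$ and $Z$ are matchings and that $E = X \cup Z$. Once $E_1$ is known to be a matching, the counting argument through lex-maximality is immediate and requires no further work.
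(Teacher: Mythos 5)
Your proof is correct, and it reaches the conclusion by a slightly different mechanism than the paper's. Both arguments rest on Claim~\ref{cl:same_matching} together with the lex-maximality of $Z$, but the paper argues \emph{locally}: given $e \in X_1$, every $Z$-edge adjacent to $e$ has weight strictly less than $w_1$ by Claim~\ref{cl:same_matching}, so adding $e$ to $Z$ and deleting the (at most two) adjacent $Z$-edges yields a matching that is lex-larger than $Z$ already at index $1$, a contradiction. You instead argue \emph{globally}: since $E = X \cup Z$ and both $X$ and $Z$ are matchings, Claim~\ref{cl:same_matching} forces all of $E_1$ to be a matching, whence lex-maximality of $Z$ at index $1$ gives $|Z_1| = |E_1|$, i.e.\ $Z_1 = E_1$, and the disjointness $X \cap Z = \emptyset$ (established before Claim~\ref{cl:same_matching}) gives $X_1 = \emptyset$. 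Your reading of Claim~\ref{cl:same_matching} is fair: its statement already asserts that no two heaviest edges are adjacent, and only the mixed $X$/$Z$ case requires the nontrivial proof, the other two cases being excluded because $X$ and $Z$ are matchings. The two arguments are of comparable length; yours avoids the ``remove at most two adjacent edges'' bookkeeping entirely, while the paper's local exchange mirrors the final step of the lemma's proof, where the very same add-one-remove-at-most-two operation is applied to $X$ to produce the eligible matching.
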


\begin{proof}
Suppose to the contrary that there exists an edge $e \in X_1$.
Then, by Claim~\ref{cl:same_matching}, we have $w(f) < w(e) = w_1$ for each adjacent edge $f \in Z$.
Thus, we can obtain a lex-larger matching $Z'$ from $Z$ by adding $e$ and by removing all the (at most two) adjacent edges, a contradiction.
\end{proof}

By Claim \ref{cl:larger_matching}, we have $|X_1| = 0 < |Z_1|$.
Let $f \in Z$ be an edge with $w(f) = w_1$.
Then, we can obtain an eligible matching $Y$ from $X$ by adding $f$ and by removing all the (at most two) adjacent edges, a contradiction.
This concludes the proof of the lemma.

\begin{remark}
We could give algorithmic proofs, e.g., what is similar to the matroid intersection case given in the next section, or what is based on the property of lex-maximal matchings that any lex-maximal matching in $E_{< i}$ can be augmented to some lex-maximal matching in $E_{\le i}$ along alternating paths (which is implicitly observed and utilized in the algorithm in \cite{irving2006rank}) as follows.

Let $X$ be a matching that is not lex-maximal, $Y$ be a lex-maximal matching, and $i$ be the smallest index in Definition~\ref{def:eligible}.
Since both $X_{< i}$ and $Y_{< i}$ are lex-maximal in $E_{< i}$, each connected component of $X_{< i} \triangle Y_{< i}$ is an alternating path/cycle which contains the same number of edges in $X_j$ and in $Y_j$ for every $j < i$ (otherwise, by flipping edges along it, one is made a lex-larger matching in $E_{< i}$, a contradiction).
In particular, all of such paths are of even length.

Let us consider $X_{\le i} \triangle Y_{\le i}$.
Since $Y_{\le i}$ is lex-maximal in $E_{\le i}$ but $X_{\le i}$ is not, there exists an alternating path/cycle which contains strictly more edges in $Y_i$ than edges in $X_i$.
Such an alternating path/cycle must be obtained by connecting edges in $X_i$ and in $Y_i$ alternately with even-length (possibly zero-length) alternating paths in $X_{< i} \triangle Y_{< i}$ between them;
hence, the number of edges in $Y_i$ is larger than that of edges in $X_i$ by exactly one and it cannot be a cycle.
Thus, there exists an alternating path $P$ in $X_{\le i} \triangle Y_{\le i}$ such that $|P \cap Y_i| = |P \cap X_i| + 1$ and $|P \cap Y_{j}| = |P \cap X_j|$ for every $j < i$.
By flipping the edges of $X_{\le i}$ along $P$ and removing at most two edges in $X_{> i}$ incident to the end vertices of $P$, we obtain an eligible matching.
\end{remark}

\subsection{Matroid Intersection: Proof of Lemma~\ref{lem:matroid_intersection}}
The proof relies on (the correctness of) an augmenting path algorithm for the weighted matroid intersection problem, which was first described in \cite{lawler1975matroid}.
We first review the basic facts based on \cite[Sections~41.2 and 41.3]{schrijver2003combinatorial}.

Let $(\bM_1, \bM_2, w)$ be a weighted matroid intersection instance.
For each matroid $\bM_i$ $(i \in \{1, 2\})$, we denote the independent set family, the rank function, and the span function\footnote{This is also called the closure function; we follow the terminology of \cite{schrijver2003combinatorial} for simplicity.} by $\cI_i \subseteq 2^E$, $r_i \colon 2^E \to \ZZ_{\ge 0}$, and $\sp_i \colon 2^E \to 2^E$, respectively.
For a set $X$ and elements $x \in X$ and $y \not\in X$, we write $X \setminus \{x\}$ and $X \cup \{y\}$ as $X - x$ and $X + y$, respectively.

Let $I\in\cI_1\cap \cI_2$ be a common independent set.
The \emph{exchangeability graph} with respect to $I$ is a directed bipartite graph $D = (E \setminus I, I; A)$ defined as follows.
Let $A \coloneqq A_1 \cup A_2$, where
\begin{align*}
  A_1 \coloneqq&\ \{\, (y, x) \mid x \in E \setminus I,~y \in I,~I + x - y \in \cI_1 \,\},\\
  A_2 \coloneqq&\ \{\, (x, y) \mid x \in E \setminus I,~y \in I,~I + x - y \in \cI_2 \,\}.
\end{align*}
We also define
\begin{align*}
  S &\coloneqq \{\, s \in E \setminus I \mid I + s \in \cI_1 \,\},\\
  T &\coloneqq \{\, t \in E \setminus I \mid I + t \in \cI_2 \,\},
\end{align*}
where the elements in $S$ and in $T$ are called \emph{sources} and \emph{sinks}, respectively. 
Note that $A_1$ and $S$ depend only on $\bM_1$, and $A_2$ and $T$ depend only on $\bM_2$.
Let $c \colon E \to \RR$ be a cost function on the vertex set defined as follows:
\begin{align}\label{eq:cost}
  c(e) &\coloneqq \begin{cases}
    w(e) & (e \in I),\\
    -w(e) & (e \in E \setminus I).
  \end{cases}
\end{align}
An $S$--$T$ path $P$ in $D$ is \emph{cheapest} if the total cost of its vertices is minimized.
Subject to this, $P$ is \emph{shortest} if the number of its vertices is minimized.

For a nonnegative integer $\ell$, a common independent set $I \in \cI_1 \cap \cI_2$ is said to be \emph{$\ell$-extreme} if $w(I)$ is maximized subject to $I \in \cI_1 \cap \cI_2$ and $|I| = \ell$.
The following lemma leads to a simple augmenting path algorithm for the weighted matroid intersection problem.
The key is that one can augment any $\ell$-extreme solution to some $(\ell + 1)$-extreme solution (if exists) by simply exchanging elements along a path.

\begin{lemma}[cf.~{\cite[Theorems~41.3, 41.5, and 41.6]{schrijver2003combinatorial}}]\label{lem:extreme}
Let $I \in \cI_1 \cap \cI_2$ be an $\ell$-extreme common independent set, and suppose that there exists a common independent set $J \in \cI_1 \cap \cI_2$ with $|J| > |I|$.
Then, $D$ contains an $S$--$T$ path, which may consist of a single vertex in $S \cap T$.
Let $P$ be a shortest cheapest $S$--$T$ path\footnote{Actually, $P$ does not need to be shortest but it suffices that $P$ does not admit any shortcut.} in $D$ with respect to the cost function $c$ defined as \eqref{eq:cost}.
Then, no inner vertex of $P$ is a source or a sink, and $I \triangle P$ is an $(\ell + 1)$-extreme common independent set.
\end{lemma}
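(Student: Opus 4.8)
The plan is to establish the three assertions of Lemma~\ref{lem:extreme} in turn---existence of an $S$--$T$ path, the absence of inner sources and sinks on a shortest cheapest $P$, and the $(\ell+1)$-extremeness of $I \triangle P$---by assembling the classical structural and optimality results of weighted matroid intersection collected in \cite[Sections~41.2 and 41.3]{schrijver2003combinatorial}. Throughout, I treat $P$ interchangeably as a path and as its vertex set, so that $I \triangle P$ denotes the set obtained by adding the $E \setminus I$-vertices of $P$ and removing the $I$-vertices of $P$.

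First I would dispose of the existence of an $S$--$T$ path, which is the purely cardinality-theoretic part. Since there is a common independent set $J$ with $|J| > |I|$, the standard (unweighted) matroid intersection augmenting structure guarantees a directed path in $D$ from $S$ to $T$, possibly a single vertex of $S \cap T$; I would cite this directly rather than reprove it. Writing such a path as $s = x_0, y_1, x_1, y_2, \dots, y_m, x_m = t$ with $x_j \in E \setminus I$, $y_j \in I$, and arcs alternating between $A_2$ and $A_1$, note that it has exactly one more vertex in $E \setminus I$ than in $I$, so $|I \triangle P| = \ell + 1$.

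Next, for independence of $I \triangle P$, I would invoke the ``no-shortcut'' exchange lemma underlying \cite[Theorem~41.5]{schrijver2003combinatorial}. The shortest-path (equivalently, no-shortcut) property forces the bipartite exchange structure induced by the $A_1$-arcs along $P$ to admit a \emph{unique} perfect matching, which by the matroid exchange axiom yields $I \triangle P \in \cI_1$; the symmetric argument with the $A_2$-arcs gives $I \triangle P \in \cI_2$. The ``no inner source or sink'' statement then follows from the same minimality: using the cheapest-path distances as node potentials, reachable sources receive distance zero, so an inner source (respectively sink) would expose an $S$--$T$ path of the same cost but with strictly fewer vertices, contradicting that $P$ is shortest among the cheapest.

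The main obstacle is the extremeness claim, where the weights genuinely enter. Here I would rely on the optimality characterization that $I$ is $|I|$-extreme if and only if $D$ contains no directed cycle of negative total vertex cost with respect to $c$ (\cite[Theorem~41.3]{schrijver2003combinatorial}). Two facts then combine: the identity $w(I \triangle P) = w(I) - c(P)$, immediate from \eqref{eq:cost} since the $I$-vertices of $P$ are removed at cost $+w$ and the $E \setminus I$-vertices are added at cost $-w$, so that a cheapest $P$ maximizes the weight gained in a single augmentation; and a reduced-cost argument showing that augmenting along a \emph{shortest} cheapest path creates no negative cycle in the exchangeability graph $D'$ with respect to $I \triangle P$. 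Granting the latter, $I \triangle P$ is $(\ell+1)$-extreme by the same characterization applied to $D'$. The delicate point---and precisely the reason the path must be shortest among the cheapest---is this no-negative-cycle preservation: I would take node potentials equal to the cheapest-path distances in $D$, verify that every arc of $D$ has nonnegative reduced cost, and check that reversing exactly the arcs of $P$ (which is how $D'$ differs from $D$) leaves all reduced costs nonnegative, so that no negative cycle can arise in $D'$. This potential bookkeeping, at the heart of \cite[Theorems~41.5 and 41.6]{schrijver2003combinatorial}, is the one nontrivial computation I would carry out in full.
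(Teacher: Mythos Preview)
The paper does not prove Lemma~\ref{lem:extreme} at all; it is stated with the citation ``cf.~\cite[Theorems~41.3, 41.5, and 41.6]{schrijver2003combinatorial}'' and used as a black box. Your proposal therefore cannot be compared against a proof in the paper, but your sketch is the standard argument that those cited theorems combine to give, and the outline is correct: existence of an $S$--$T$ path from the unweighted theory, independence of $I \triangle P$ via the no-shortcut exchange lemma, the inner-vertex claim from shortest-among-cheapest minimality, and extremeness via the no-negative-cycle characterization together with the reduced-cost potential argument. One small caveat: your description of how $D'$ differs from $D$ as ``reversing exactly the arcs of $P$'' is an oversimplification, since exchanging along $P$ can create and destroy arcs beyond those on $P$; the actual argument in \cite[Theorem~41.5]{schrijver2003combinatorial} handles this, but if you write it out in full you should not rely on that shortcut.
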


Now we start the proof of Lemma~\ref{lem:matroid_intersection}.
Let $X \in \cI_1 \cap \cI_2$ be a common independent set that is not lex-maximal, and let $i$ be the smallest index such that $X_{\le i}$ is not lex-maximal in the restricted instance whose ground set is $E_{\le i}$.

\begin{claim}\label{cl:augment}
There exists a common independent set $Y' \subseteq E_{\le i}$ such that $|Y'_j| = |X_j|$ for any $j < i$, $|Y'_i| = |X_i| + 1$, $\sp_1(X_{\le i}) \subseteq \sp_1(Y')$, and $\sp_2(X_{\le i}) \subseteq \sp_2(Y')$.
\end{claim}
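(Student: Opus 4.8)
The plan is to reduce the claim to a single augmentation step of the weighted matroid intersection algorithm, run on the restricted instance with ground set $E_{\le i}$ equipped with an artificial ``lexicographic'' weight. Concretely, I would set $\hat w(e) \coloneqq M^{i-j}$ for every $e \in E_j$ with $j \le i$, where $M$ is a constant larger than $|E|$. With such super-increasing weights, among common independent sets of a \emph{fixed} size, being $\hat w$-heaviest is equivalent to being lex-maximal, and the maximizer has a unique profile $(|Y_1|,\dots,|Y_i|)$ (uniqueness of base-$M$ representations). This lets me translate Lemma~\ref{lem:extreme} about $\ell$-extreme sets into statements about lex-maximal sets of a prescribed size.

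First I would exploit the minimality of $i$ in Definition~\ref{def:eligible}: $X_{<i}$ is lex-maximal in $E_{<i}$. Two consequences follow. Setting $\ell \coloneqq |X_{\le i}|$, the set $X_{\le i}$ is $\ell$-extreme, since a lex-larger common independent set of the same size would either beat $X_{<i}$ at some level $<i$ (impossible) or agree below level $i$ and hence agree at level $i$ (impossible). Moreover the lex-maximal set $Z \subseteq E_{\le i}$ has profile $(|X_1|,\dots,|X_{i-1}|,m_i)$ with $m_i > |X_i|$, so $|Z| > \ell$; the low-level profile is forced because $Z_{<i}$ must itself be lex-maximal in $E_{<i}$. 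I then apply Lemma~\ref{lem:extreme} with $I = X_{\le i}$ and $J = Z$ to obtain a shortest cheapest $S$--$T$ path $P$ for the cost \eqref{eq:cost} derived from $\hat w$, and set $Y' \coloneqq X_{\le i} \triangle P$, which is $(\ell+1)$-extreme. Its profile is the unique lex-maximal size-$(\ell+1)$ profile, which I identify as $(|X_1|,\dots,|X_{i-1}|,|X_i|+1)$: it is achievable (delete any $m_i-|X_i|-1$ elements of $Z_i$ from $Z$) and maximal (no size-$(\ell+1)$ set can exceed $|X_j|$ at a level $j<i$, by lex-maximality of $X_{<i}$). This gives the first two conditions $|Y'_j| = |X_j|$ $(j<i)$ and $|Y'_i| = |X_i|+1$.

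The main obstacle is the span containment. Writing $P$ as $z_0,z_1,\dots,z_{2p}$ with $z_0 \in S$ and $z_{2p} \in T$, the added elements are the even-indexed ones and the removed set $R \coloneqq X_{\le i} \setminus Y'$ consists of the odd-indexed ones. By Lemma~\ref{lem:extreme} no inner vertex is a source or a sink, so each inner added element $z_{2k}$ satisfies $X_{\le i}+z_{2k}\notin\cI_1$ and $X_{\le i}+z_{2k}\notin\cI_2$, i.e.\ $z_{2k}\in\sp_1(X_{\le i})\cap\sp_2(X_{\le i})$. The delicate point is the two endpoints: I would show that when $p \ge 1$ the sink $z_{2p}$ is not also a source and the source $z_0$ is not also a sink. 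Indeed, if say $z_{2p} \in S \cap T$, then $X_{\le i}+z_{2p}$ is a size-$(\ell+1)$ common independent set, and comparing its profile with that of $Y'$ forces a contradiction: if $z_{2p}$ has level $<i$ it would beat the lex-maximal $Y'$, and if its level is $i$ then the single-vertex path $(z_{2p})$ is an equally cheap but strictly shorter $S$--$T$ path, contradicting the choice of $P$. Hence every added element other than the source endpoint $z_0$ lies in $\sp_1(X_{\le i})$, and every added element other than the sink endpoint $z_{2p}$ lies in $\sp_2(X_{\le i})$.

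From here the containment is a short rank argument. Since $Y'-z_0$ is $\bM_1$-independent of size $\ell$ with all its elements in $\sp_1(X_{\le i})$, we get $\sp_1(Y'-z_0) \subseteq \sp_1(X_{\le i})$; as both flats have rank $\ell$ they are equal, so $\sp_1(X_{\le i}) = \sp_1(Y'-z_0) \subseteq \sp_1(Y')$. Symmetrically, $Y'-z_{2p}$ is $\bM_2$-independent of size $\ell$ inside $\sp_2(X_{\le i})$, giving $\sp_2(X_{\le i}) \subseteq \sp_2(Y')$. The degenerate case $p=0$, where $z_0=z_{2p}\in S\cap T$ and $Y' = X_{\le i}+z_0 \supseteq X_{\le i}$, makes both span conditions immediate. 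This completes the claim, and I expect the endpoint analysis in the previous paragraph to be the only genuinely nonroutine step.
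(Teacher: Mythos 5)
Your proposal is correct and follows essentially the same route as the paper: the same super-increasing auxiliary weight reducing lex-maximality to $\ell$-extremeness, the same application of Lemma~\ref{lem:extreme} to get $Y' = X_{\le i} \triangle P$, the same observation that every added vertex other than the source (resp.\ sink) endpoint lies in $\sp_1(X_{\le i})$ (resp.\ $\sp_2(X_{\le i})$), and the same rank-$\ell$ span comparison to conclude. Your explicit verification that the sink is not also a source (via the shorter equally-cheap single-vertex path) is a point the paper only asserts parenthetically, but it is the same underlying argument.
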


\begin{proof}
Define an auxiliary weight function $w' \colon E_{\le i} \to \RR_{> 0}$ by $w'(e) \coloneqq n^{i-j}$ for each $e \in E_j$ $(j = 1, 2, \dots, i)$, where $n = |E_{\le i}|$.
As remarked in the introduction, the lexicographical order coincides with the weighted order, i.e., for any two subsets $Z, Z' \subseteq E_{\le i}$, $Z$ is lex-larger than $Z'$ if and only if $w'(Z) > w'(Z')$.

Let $\ell \coloneqq |X_{\le i}|$.
Then, $X_{\le i}$ is $\ell$-extreme in the restricted instance $(\bM_1 | E_{\le i}, \bM_2 | E_{\le i}, w')$, which has a larger common independent set.
By Lemma~\ref{lem:extreme}, one can obtain an $(\ell + 1)$-extreme common independent set $Y' \subseteq E_{\le i}$ by flipping $X_{\le i}$ along a source-sink path $P$ in the exchangeability graph.
Note that $|Y'_j| = |X_j|$ for any $j < i$ and $|Y'_i| = |X_i| + 1$ by the choice of $i$ and the definition of $w'$.

If $P$ consists of a single vertex (which is a source and a sink), then the claim immediately follows since $Y' = X_{\le i} + y$ for some element $y \in E_i$.
Otherwise, let $s$ and $t$ be the first and last vertices of $P$, respectively.
By the definitions of the exchangeability graph and the sources and sinks, $y \in \sp_1(X_{\le i}) \subseteq \sp_1(X_{\le i} + s)$ for every $y \in P - s$ and $y \in \sp_2(X_{\le i}) \subseteq \sp_2(X_{\le i} + t)$ for every $y \in P - t$ (recall that any $y \in P - s$ is not a source and any $y \in P - t$ is not a sink).
Hence, we have $\sp_1(X_{\le i} \cup P) = \sp_1(X_{\le i} + s)$ and $\sp_2(X_{\le i} \cup P) = \sp_2(X_{\le i} + t)$, and then
\begin{align*}
  \ell + 1 &= |Y'| = r_1(Y') \le r_1(X_{\le i} \cup P) = r_1(X_{\le i} + s) \le r_1(X_{\le i}) + r_1(s) = |X_{\le i}| + 1 = \ell + 1,\\
  \ell + 1 &= |Y'| = r_2(Y') \le r_2(X_{\le i} \cup P) = r_2(X_{\le i} + t) \le r_2(X_{\le i}) + r_2(t) = |X_{\le i}| + 1 = \ell + 1,
\end{align*}
which implies that the equality holds everywhere.
Thus we have $\sp_1(Y') = \sp_1(X_{\le i} + s) \supseteq \sp_1(X_{\le i})$ and $\sp_2(Y') = \sp_2(X_{\le i} + t) \supseteq \sp_2(X_{\le i})$, which completes the proof.
\end{proof}

Take a subset $Y' \subseteq E_{\le i}$ satisfying the conditions in Claim~\ref{cl:augment}.
We then obtain an eligible common independent set $Y \subseteq E$ from $Y' \cup X_{> i}$ by removing at most two elements in $X_{> i}$ as follows.
If $r_1(Y' \cup X_{> i}) = r_2(Y' \cup X_{> i}) = |X| + 1$, then $Y' \cup X_{> i} \in \cI_1 \cap \cI_2$ and we do not need to remove any element.
Suppose that $r_1(Y' \cup X_{> i}) < |X| + 1$.
As $\sp_1(X_{\le i}) \subseteq \sp_1(Y')$, we have $r_1(Y' \cup X_{> i}) \ge r_1(X) = |X|$, and hence $r_1(Y' \cup X_{> i}) = |X|$.
This implies that $Y' \cup X_{> i}$ contains exactly one circuit of $\bM_1$, which must intersect $X_{> i}$ since $Y' \in \cI_1$.
Hence, there exists an element $x \in X_{> i}$ such that $Y' \cup (X_{> i} - x) \in \cI_1$.
The same holds for $\bM_2$.
Thus we obtain a common independent set $Y \subseteq Y' \cup X_{> i}$ with $|X_{> i} \setminus Y_{> i}| \le 2$, which is eligible.

\section{Concluding Remarks}
In this paper, we have analyzed how good a lex-maximal solution is in the weighted matching and matroid intersection problems based on how dispersed the distinct weight values are.
It is well-known that, subject to a single matroid, a lex-maximal solution is always optimal, which can be found by a greedy algorithm.
For more general independence systems, Jenkyns \cite{jenkyns1976efficacy} and Korte and Hausmann \cite{korte1978analysis} independently analyzed the worst-case approximation ratio of a greedy algorithm.
In particular, the worst ratio is $2$ in the weighted matching and matroid intersection problems, while a lex-maximal solution (which is a possible output of a greedy algorithm) always achieves the maximum weight if the distinct weight values are sufficiently dispersed.
From this perspective, we have filled the gap between these two situations.

A natural question is as follows: what about a further (common) generalization, e.g., the weighted matroid parity problem?
We just remark that it seems difficult to extend our algorithmic proof straightforwardly bacause no counterpart of augmentation from any $\ell$-extreme solution to some $(\ell+1)$-extreme solution along a path (Lemma~\ref{lem:extreme}) is known.
A minimal counterexample proof (like the matching case) also seems nontrivial; in particular, we have found no counterpart of Claim~\ref{cl:same_matching}.

\section*{Acknowledgments}
We are grateful to Naoyuki Kamiyama for providing comments on a literature on rank-maximal matchings.
We appreciate the anonymous reviewer's careful reading and helpful comments.
In particular, the alternative proof of Lemma~\ref{lem:matching} based on the augmentation property of rank-maximal matchings came from their comment.

Krist\'of B\'erczi was supported by the Lend\"ulet Programme of the Hungarian Academy of Sciences -- grant number LP2021-1/2021 and by the Hungarian National Research, Development and Innovation Office -- NKFIH, grant number FK128673.
Tam\'as Kir\'aly was supported by the Hungarian National Research, Development and Innovation Office -- NKFIH, grant number K120254.
Yutaro Yamaguchi was supported by JSPS KAKENHI Grant Numbers JP20K19743 and JP20H00605 and by Overseas Research Program in Graduate School of Information Science and Technology, Osaka University.
Yu Yokoi was supported by JSPS KAKENHI Grant Number JP18K18004 and JST, PRESTO Grant Number JPMJPR212B.

``Application Domain Specific Highly Reliable IT Solutions'' project  has been implemented with the support provided from the National Research, Development and Innovation Fund of Hungary, financed under the Thematic Excellence Programme TKP2020-NKA-06 (National Challenges Subprogramme) funding scheme.

\bibliographystyle{abbrv}
\bibliography{Doubling}

\begin{thebibliography}{1}

\bibitem{huang2019exact}
C.-C. Huang, N.~Kakimura, and N.~Kamiyama.
\newblock Exact and approximation algorithms for weighted matroid intersection.
\newblock {\em Mathematical Programming}, 177(1-2):85--112, 2019.

\bibitem{irving2006rank}
R.~W. Irving, T.~Kavitha, K.~Mehlhorn, D.~Michail, and K.~E. Paluch.
\newblock Rank-maximal matchings.
\newblock {\em ACM Transactions on Algorithms (TALG)}, 2(4):602--610, 2006.

\bibitem{jenkyns1976efficacy}
T.~A. Jenkyns.
\newblock The efficacy of the ``greedy'' algorithm.
\newblock In {\em Proceedings of the 7th Southeastern Conference on
  Combinatorics, Graph Theory and Computing}, pages 341--350, 1976.

\bibitem{kamiyama2013matroid}
N.~Kamiyama.
\newblock Matroid intersection with priority constraints.
\newblock {\em Journal of the Operations Research Society of Japan},
  56(1):15--25, 2013.

\bibitem{korte1978analysis}
B.~Korte and D.~Hausmann.
\newblock An analysis of the greedy heuristic for independence systems.
\newblock In {\em Annals of Discrete Mathematics}, volume~2, pages 65--74.
  Elsevier, 1978.

\bibitem{lawler1975matroid}
E.~L. Lawler.
\newblock Matroid intersection algorithms.
\newblock {\em Mathematical Programming}, 9(1):31--56, 1975.

\bibitem{michail2007reducing}
D.~Michail.
\newblock Reducing rank-maximal to maximum weight matching.
\newblock {\em Theoretical Computer Science}, 389(1-2):125--132, 2007.

\bibitem{schrijver2003combinatorial}
A.~Schrijver.
\newblock {\em Combinatorial Optimization: Polyhedra and Efficiency}.
\newblock Springer, 2003.

\end{thebibliography}

\end{document}